\newcommand{\Var}{{\cal{V}_{\mathbb{C}}}}
\def\1{\underline{1}}
\def\AA{{\mathbb A}}
\def\LLL{{\mathbb L}}
\def\Z{{\mathbb Z}}
\def\C{{\mathbb C}}
\def\H{{\mathbb H}}
\def\A{{\mathcal A}}
\def\CP{\mathbb C\mathbb P}
\newtheorem{theorem}{Theorem}
\newenvironment{corollary}
{\smallskip\noindent{\bf Corollary\/}.}{\smallskip\par}
\newenvironment{example}
{\smallskip\noindent{\bf Example\/}.}{\smallskip\par}
\newenvironment{remarks}
{\smallskip\noindent{\bf Remarks\/}.}{\smallskip\par}
\newenvironment{proof}
{\noindent{\bf Proof\/}.}{{ $\square$}\smallskip\par}
\newenvironment{conjecture}
{\smallskip\noindent{\bf Conjecture\/}:}{\smallskip\par}
\title{On generating series of classes of equivariant Hilbert schemes of fat points
\footnote{Math. Subject Class.: 14C05, 14G10}
}
\author{S.M.~Gusein-Zade \thanks{Partially supported by the grants
RFBR-007-00593, NSh-709.2008.1.
Address: Moscow State University, Faculty
of Mathematics and Mechanics, Moscow, GSP-1, 119991, Russia. E-mail:
sabir\symbol{'100}mccme.ru} \and I.~Luengo \and
A.~Melle--Hern\'andez \thanks{The last two authors were partially
supported by the grant MTM2007-67908-C02-02. Address: University
Complutense de Madrid, Dept. of Algebra, Madrid, 28040, Spain.
E-mail: iluengo\symbol{'100}mat.ucm.es,
amelle\symbol{'100}mat.ucm.es}}
\date{}
\begin{document}
\def\eps{\varepsilon}

\maketitle
\begin{abstract}
In previous papers the authors gave formulae for generating series of classes (in the Grothendieck ring $K_0(\Var)$ of complex quasi-projective varieties) of Hilbert schemes of zero-dimensional subschemes on smooth varieties and on orbifolds in terms of certain local data and the, so called, power structure over the ring $K_0(\Var)$. Here we give an analogue of these formulae for equivariant (with respect to an action of a finite group on a smooth variety) Hilbert schemes of zero-dimensional subschemes and compute some local generating series for an action of the cyclic group on a smooth surface.
\end{abstract}

\section{Introduction}
For a complex $d$-dimensional quasi-projective variety $X^d$,
let $\mbox{Hilb}^k_X$ be the Hilbert scheme of zero-dimensional subschemes (sets of ``fat points'') of length $k$ of $X$. For a locally closed subvariety $Y\subset X$, let us denote by $\mbox{Hilb}^k_{X,Y}$ the Hilbert scheme of zero-dimensional subschemes of length $k$ of the variety $X$ supported at points of the variety $Y$, and for a point $x\in X$, $\mbox{Hilb}^k_{X,x}:=\mbox{Hilb}^k_{X,\{x\}}$.

\medskip
Let $K_0(\Var)$ be the Grothendieck ring of complex quasi-projective
varieties. This is the abelian group generated by the classes $[X]$ of all complex quasi-projective varieties $X$ modulo the relations:
\begin{enumerate}
\item[1)] if varieties $X$ and $Y$ are isomorphic, then $[X]=[Y]$;
\item[2)] if $Y$ is a Zariski closed subvariety of $X$, then $[X]=[Y]+[X\setminus Y]$.
\end{enumerate}
The multiplication in $K_0(\Var)$ is defined by the Cartesian product
of varieties: $[X_1]\cdot [X_2]=[X_1\times X_2]$.
The class $[\AA^1_\C]\in K_0(\Var)$ of the complex affine line is
denoted by $\LLL$. For a quasi-projective variety $X$, the class $[X]$,
being an additive invariant of the variety, can be considered as a \emph{generalized Euler characteristic} $\chi_g(X)$
of the variety $X$. The additivity of $\chi_g(\bullet)$ (property 2) above) permits to use it as a measure for the notion of the integral with respect to the generalized Euler characteristic. If $\psi:X\to \A$ is a \emph{constructible function} on a variety $X$ with values in an abelian goup $\A$, then the \emph{integral of $\psi$ with respect to the generalized Euler characateristic} is defined  as
\begin{equation}\label{Euler_int}
\int_X \psi  {d\chi_g}:=\sum_{a\in \A} \chi_g(\psi^{-1}\{a\})\cdot a \in K_0(\Var)\otimes_\Z \A.
\end{equation}

\medskip
Let
$$
\H_X(T):=1+\sum\limits_{k=1}^\infty\,[\mbox{Hilb}^k_X]\,T^k \in 1+T \cdot K_0(\Var)[[T]]\quad \mbox{and}
$$
$$\H_{X,Y}(T):= 1+\sum\limits_{k=1}^\infty\,[\mbox{Hilb}^k_{X,Y}]\,T^k\in 1+T\cdot K_0(\Var)[[T]]\,\qquad$$
be the generating series of classes of Hilbert schemes.

\medskip
In \cite{MRL}, there was defined a notion of a \emph{power structure} over a ring and there was described a natural power structure over
the Grothendieck ring $K_0(\Var)$ of complex quasi-projective
varieties. This means that for a series $A(T)=1+a_1T+a_2T^2+\ldots \in 1+T \cdot K_0(\Var)[[T]]$ and for an element $m\in K_0(\Var)$
one defines a series $(A(T))^m\in 1+ T \cdot K_0(\Var)[[T]]$ so that all the usual properties of the exponential function hold.
For the natural power structure over the ring $K_0(\Var)$ and for $a_i=[A_i], m=[M]$ ,  where $A_i$ and $M$ are quasi-projective varieties, the series $(A(T))^m$ has the following  geometric description.
The coefficient at $T^k$
in the series
$$
(1+[A_1]T+[A_2]T^2+\ldots)^{[M]}
$$
is represented by the configuration space of pairs $(K,\varphi)$ consisting of a finite subset $K$ of the variety $M$ and a map $\varphi$ from $K$ to the disjoint
union $\coprod_{i=1}^\infty A_i$ of the varieties $A_i$, such that $\sum_{x\in K}I(\varphi(x))=k$, where $I:\coprod_{i=1}^\infty A_i\to {\Z}$ is the tautological function sending the component $A_i$ of the disjoint union to $i$.
This power structure is connected with the $\lambda$-structure on the ring $K_0(\Var)$ defined by the Kapranov zeta function \cite{Kapranov}:
$$
\zeta_{M}(T)=1+[S^1 M]\cdot T+[S^2 M]\cdot t^2+[S^3X]\cdot T^3+\ldots 
$$
where $S^k M$ is the $k$-th-symmetric power of the variety $M$: $\zeta_{M}(T)=(1-T)^{-[M]}.$

\medskip
There are two natural homomorphisms from the Grothendieck ring $K_0(\Var)$ to the ring $\Z$ of integers and to the ring $\Z[u,v]$ of polynomials in two variables: the Euler characteristic (alternating sum of ranks of cohomology groups with
compact support) $\chi:K_0(\Var)\to \Z$ and the Hodge-Deligne polynomial
$e:K_0(\Var)\to \Z[u,v]$. These homomorphisms respect the power
structures over the corresponding rings (see e.g. \cite{Michigan}).

\medskip
In \cite{Michigan}, it was shown that, for a smooth quasi-projective variety $X$ of dimension $d$, the following equation holds:
\begin{equation}\label{eq1}
\H_X(T)=\left(\H_{\AA^d_{\C},0}(T)\right)^{[X]}
\end{equation}
where $\AA^d_{\C}$ is the complex affine space of
dimension $d$. For $d=2$, i.e. for surfaces, in other terms this
equation was proved in the Grothendieck ring of motives by
L.~G\"ottsche~\cite{Got2}. In this case one has
\begin{equation}\label{got}
\H_{\AA^2_{\C},0}(T)=\prod_{i=1}^{\infty}\frac{1}{1-\LLL^{i-1} T^i}\,.
\end{equation}
For an arbitrary dimension $d$, the
reduction of the equation (\ref{eq1}) for the Hodge-Deligne
polynomial
was proved by J.~Cheah in \cite{Cheah}.

\medskip
A generalization of the equation (\ref{eq1}) for orbifolds was given in \cite{orbifolds}.
In this case the function $\H_{X,x}(T)$ 
is a constructible function on $X$ with values in the abelian group $1+T\cdot K_0(\Var)[[T]]$ (with respect to multiplication)
and one has 
\begin{equation}\label{eq2}
\H_X(T)=\int_X \H_{X,x}(T)^{d\chi_g}.
\end{equation}
(Here $d\chi_g$ is put into the exponent since the group operation in $1+T \cdot K_0(\Var)[[T]]$ is the multiplication. In this case the sum in RHS of the equation (\ref{Euler_int}) is substituted by the product.)
The equation (\ref{eq2}) reduces the computation of the generating series $\H_X(T)$ to the computation of the local data $\H_{X,x}(T)$.

\medskip
Here we shall give an analogue of equation (\ref{eq2}) for equivariant (with respect to an action of a finite group $G$ on a smooth variety) Hilbert schemes of zero dimensional subschemes and compute some local generating series for an action of a cyclic group on a smooth surface.

\section{Equivariant Hilbert scheme of fat points}
Let $G$ be a finite group (of order $|G|$) acting on a smooth complex $d$-dimensional quasi-projective complex variety $X^d$. The group $G$ also acts on the Hilbert schemes $\mbox{Hilb}^k_X$ of zero-dimensional subschemes on $X$. One can say that there are (at least) three natural notions of \emph{equivariant Hilbert schemes} of zero-dimensional subschemes on $X$ (see e.g. \cite{cmt}, \cite{kuznetsov}, \cite{nolla}).

First, one can define the equivariant Hilbert scheme ${}^{(1)}\mbox{Hilb}^{G,k}_{X}$ as the $G$-invariant part of the action of the group $G$ on $\mbox{Hilb}^{k}_{X}$.

Second, as the equivariant Hilbert scheme ${}^{(2)}\mbox{Hilb}^{G,k}_{X}$ one can take the (unique) component (or union of components if the variety $X$ is reducible) of ${}^{(1)}\mbox{Hilb}^{G,k}_{X}$ which maps birationally on the $k/|G|$-th symmetric power of $X$. 

A $G$-invariant zero-dimensional subscheme of lengh $k$ (i.e. a closed pont of ${}^{(1)}\mbox{Hilb}^{G,k}_{X}$) has 
a decomposition into parts supported at different $G$-orbits. For each of these parts one has the natural representation of the group $G$ on the fibre of the tautological bundle on the Hilbert scheme.
The third version  ${}^{(3)}\mbox{Hilb}^{G,k}_{X}$ of the equivariant Hilbert scheme consists of those points of  ${}^{(1)}\mbox{Hilb}^{G,k}_{X}$ for which all these representations corresponding to parts supported at different $G$-orbits are  multiples of the regular one. 

In the last two cases $k$ must be a multiple of the order $|G|$ of the group $G$. One always has 
$$
{}^{(1)}\mbox{Hilb}^{G,k}_{X}\supset {}^{(3)}\mbox{Hilb}^{G,k}_{X}\supset {}^{(2)}\mbox{Hilb}^{G,k}_{X}
$$
and all these equivariant Hilbert schemes are quasi-projective varieties. They are smooth if $\mbox{Hilb}^{k}_{X}$ is smooth. In particular this holds if $X$ is a smooth surface ($d=2$). In many cases, in particular for surfaces, the last two notions coincide. This is not true in general (see e.g. \cite{cmt}).
The equivariant Hilbert schemme  ${}^{(1)}\mbox{Hilb}^{G,k}_{X}$ is always larger than the other two. In particular 
${}^{(1)}\mbox{Hilb}^{G,1}_{\C^d}\ne \emptyset$. It seems that the last two notions are more interesting from  geometrical point of view. In particular, for a finite group $G \subset SL(2,\C)$ acting on $\C^2$ in the natural way, ${}^{(2)}\mbox{Hilb}^{G,|G|}_{\C^2}(={}^{(3)}\mbox{Hilb}^{G,|G|}_{\C^2})$ is a crepant resolution of the factor space $\C^2/G$. However, the first one could be interesting as well. In particular, it seems that formulae for the generating series of classes of 
${}^{(\bullet)}\mbox{Hilb}^{G,k}_{\C^2}$ (or of ${}^{(\bullet)}\mbox{Hilb}^{G,k}_{\C^2,0}$) are somewhat simpler in this case. 

Let 
$$
{}^{(\bullet)}\H_X^G(T):=1+\sum\limits_{k=1}^\infty\,[{}^{(\bullet)}\mbox{Hilb}^{G,k}_X]\,T^k 
$$
and, for a locally closed $G$-invariant subvariety $Y\subset X$, 
$${}^{(\bullet)}\H^G_{X,Y}(T):= 1+\sum\limits_{k=1}^\infty\,[{}^{(\bullet)}\mbox{Hilb}^{G,k}_{X,Y}]\,T^k$$
be the generating series of classes of the equivariant Hilbert schemes.

\section{Generating series of classes of equivariant Hilbert scheme of fat points through local data}

Statements of this section have the same form for all three notions of the equivariant Hilbert scheme of zero-dimensional subschemes discussed above. 
Since we do not have to specify the notion we shall use the notation ${}^{(\bullet)}\mbox{Hilb}^{G,k}_{X}$, ${}^{(\bullet)}\mbox{Hilb}^{G,k}_{X,x}, \ldots$ without indicating the number corresponding to the version.

\medskip
Let $G$ be a finite group (of order $|G|$) acting on a smooth $d$-dimensional quasi-projective complex variety $X^d$, let $Y:=X^d/G$ be the corresponding factor space and let $p:X\to Y$ be the projection map.
For a point $x\in X$, let $G_{x}=\{g\in G: g*x= x \}\subset G$ be the isotropy subgroup of the point $x$.
Let $y$ be a point of $Y=X^d/G$, i.e. a $G$-orbit in $X$. For all points $x$ in this $G$-orbit the isotropy subgroups $G_x$ are conjugate to each other. One has the natural representation $\alpha_x$ of the isotropy group $G_x$ on the tangent space $T_xX \cong \C^d$ at the point $x$.

For a conjugate class $h$ of subgroups of $G$, for a representative $H\subset G$ of this class, and for a representation $\alpha$ of the group $H$ on $\C^d$, let $Y_{h,\alpha}$ be the set of points $y \in Y$ such that the isotropy subgroup of each point in the corresponding orbit belongs to $h$ and (for those of them whose isotropy group coincides with $H$) the representation of the isotropy subgroup coincides with $\alpha$. For a representation $\alpha$ of a subgroup $H\subset G$ on $\C^d$, one considers the equivariant Hilbert scheme  ${}^{(\bullet)}\mbox{Hilb}^{H,k}_{\C^d,0}$
and the corresponding generating series 
$${}^{(\bullet)}\H^H_{\C^d,\alpha}(T):=1+\sum\limits_{k=1}^\infty\,[{}^{(\bullet)}\mbox{Hilb}^{H,k}_{\C^d,0}]\,T^k \, .
$$

\begin{theorem}\label{theo1}
The following equations holds:
\begin{equation}\label{eqtheo1}
{}^{(\bullet)}\H_X^G(T)=\prod_{h,\alpha}\left( {}^{(\bullet)}\H^H_{\C^d,\alpha}(T^{|G|/|H]})\right)^{[Y_{h,\alpha}]}= \int_{X/G} {}^{(\bullet)}\H_{X,x}^{G_x}(T^{|G|/|G_x|})^{d\chi_g}.
\end{equation}
\end{theorem}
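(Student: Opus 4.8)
The plan is to reduce the computation of the class of the equivariant Hilbert scheme to a fibrewise analysis over the quotient $Y = X/G$, exactly as in the non-equivariant orbifold case (equation~(\ref{eq2})), and then to recognize the resulting "configuration-space over $Y$" description as an instance of the power structure over $K_0(\Var)$. First I would recall that a $G$-invariant zero-dimensional subscheme $Z \subset X$ of length $k$ decomposes canonically as a disjoint union of its restrictions to the various $G$-orbits meeting its support: if $\mathrm{Supp}(Z)$ meets the $G$-orbits $y_1,\dots,y_r \in Y$, then $Z = Z_1 \sqcup \cdots \sqcup Z_r$ with $Z_j$ supported on the orbit $y_j$. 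Since $G$ acts freely transitively on the points of a fixed orbit, $Z_j$ is itself determined by its restriction to a single chosen point $x_j$ in that orbit, which is a $G_{x_j}$-invariant subscheme of $X$ supported at $x_j$; and, by the smoothness of $X$, the formal (or étale) local ring at $x_j$ is $G_{x_j}$-equivariantly isomorphic to the completed local ring of $(\C^d, 0)$ equipped with the linear action $\alpha_{x_j}$, so this restriction is a point of ${}^{(\bullet)}\mbox{Hilb}^{G_{x_j}, k_j}_{\C^d, 0}$ where $k_j$ is its length. The length bookkeeping gives $\sum_j k_j = k$, and since each part supported on an orbit of size $|G|/|G_{x_j}|$ contributes a multiple of $|G|/|G_{x_j}|$ to the total length (in the versions $(2)$ and $(3)$; for version $(1)$ one only uses the multiplicativity of lengths), the substitution $T \mapsto T^{|G|/|H|}$ in the local series is forced.

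Next I would make this decomposition into a statement about varieties rather than points. Stratify $Y$ by the locally closed subvarieties $Y_{h,\alpha}$ (there are finitely many conjugacy classes of subgroups of $G$ and, for each, finitely many isomorphism classes of $d$-dimensional representations that can occur, so this is a finite constructible partition). Over a point $y \in Y_{h,\alpha}$ the local moduli of the orbit-part is ${}^{(\bullet)}\mbox{Hilb}^{H,k_j}_{\C^d,0}$ with the same $(h,\alpha)$ for every $y$ in that stratum; hence the piece of ${}^{(\bullet)}\mbox{Hilb}^{G,k}_X$ lying over configurations of orbits inside the strata is, set-theoretically and as a variety, the space of pairs $(K, \varphi)$ where $K \subset Y$ is a finite set, $\varphi$ assigns to each $y \in K$ a nonempty local equivariant subscheme at a point of the orbit $y$ (an element of $\coprod_{k_j \geq 1}{}^{(\bullet)}\mbox{Hilb}^{H_y, k_j}_{\C^d,0}$, $H_y$ the relevant isotropy type), and the weights $|G|/|H_y| \cdot (\text{length of }\varphi(y))$ sum to $k$. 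By the geometric description of the power structure over $K_0(\Var)$ recalled in the Introduction — the coefficient of $T^k$ in $(1 + [A_1]T + [A_2]T^2 + \cdots)^{[M]}$ is the configuration space of pairs $(K,\varphi)$, $K \subset M$ finite, $\varphi: K \to \coprod A_i$, with $\sum_{x \in K} I(\varphi(x)) = k$ — this is precisely the coefficient of $T^k$ in $\prod_{h,\alpha}\big({}^{(\bullet)}\H^H_{\C^d,\alpha}(T^{|G|/|H|})\big)^{[Y_{h,\alpha}]}$, the product over the finitely many strata being compatible with the concatenation of configurations lying over disjoint pieces of $Y$. The second equality in~(\ref{eqtheo1}) is then just the definition~(\ref{Euler_int}) of the integral with respect to $\chi_g$: writing the product over strata as an integral over $Y = X/G$ of the constructible function $y \mapsto {}^{(\bullet)}\H^{G_x}_{X,x}(T^{|G|/|G_x|})$ (which is constant, equal to ${}^{(\bullet)}\H^H_{\C^d,\alpha}(T^{|G|/|H|})$, on each $Y_{h,\alpha}$ by the equivariant formal triviality above).

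The main obstacle I anticipate is justifying that the decomposition $Z \mapsto (Z_1,\dots,Z_r)$ is an isomorphism of \emph{varieties} (a locally trivial fibration over the configuration base, or at least a piecewise-trivial one) rather than merely a bijection on closed points — i.e.\ that the tautological identification of the equivariant deformation theory at a point with that of $(\C^d,0,\alpha)$ can be made algebraically over each stratum $Y_{h,\alpha}$, uniformly. This is where the smoothness of $X$ is essential: one uses that, étale-locally on $X$, a $G_x$-invariant neighbourhood of $x$ is $G_x$-equivariantly étale over $(T_xX, \alpha_x)$, together with the Luna slice theorem / the fact that on each stratum the isotropy data is locally constant, to produce such trivializations up to a stratification of the base — and since $K_0(\Var)$ only sees classes, piecewise-triviality is enough. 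A secondary point to be careful about is that the argument is genuinely uniform across the three versions $(\bullet) = (1),(2),(3)$: the orbit-wise decomposition and the representation condition defining version $(3)$ (resp.\ the "maps birationally to the symmetric power" condition of version $(2)$) are themselves local to the orbit parts, so the factorization respects whichever of the three conditions one has imposed, and the weight $|G|/|H|$ in the exponent of $T$ is dictated by the length of an orbit in all cases. Everything else is the formal manipulation of the power structure and the integral, which I would not spell out in detail.
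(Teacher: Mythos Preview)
Your proposal is correct and follows the same overall architecture as the paper's proof: stratify $Y=X/G$ by isotropy type and tangent representation, trivialize the local equivariant Hilbert functor along each stratum, and then read off the configuration-space description as an instance of the power structure over $K_0(\Var)$. The one substantive difference lies in the step you single out as ``the main obstacle.'' Where you propose to invoke the Luna slice theorem or \'etale-local linearization and then appeal to piecewise triviality, the paper gives an elementary \emph{Zariski}-local construction: at a point $x$ with $G_x=H$, choose a regular system of parameters $u_1,\dots,u_d$ in which the $H$-action on $T_xX$ has the standard matrix form $\alpha$, and replace them by the averaged functions
\[
\widetilde u_i=\frac{1}{|H|}\sum_{g\in H}\sum_{j=1}^d \alpha_{i,j}(g^{-1})\,g^*u_j,
\]
which satisfy $g^*\widetilde u_i=\sum_j\alpha_{i,j}(g)\widetilde u_j$ exactly and have $d\widetilde u_i=du_i$ at $x$. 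This yields, on a Zariski-open neighbourhood of $x$ in its stratum, explicit $H$-equivariant coordinates identifying $H$-invariant fat points at nearby $x'$ with those of $(\AA^d_\C,0,\alpha)$; so the uniform algebraic trivialization you need is produced by hand, without passing to \'etale covers. Your route via Luna is equally valid and arguably more conceptual; the paper's averaging trick is simply more self-contained. (One small slip: $G$ does not act \emph{freely} on an orbit with nontrivial isotropy, only transitively --- but your conclusion, that the $G$-invariant orbit-part is determined by its $G_x$-invariant restriction to a single point, is correct as stated.)
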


\begin{proof} The proof essentially follows the lines of the proof of Theorem 1 from \cite{orbifolds}. If $Z$ is a $G$-invariant Zariski closed subset
of the variety $X$, one has:
\begin{equation}\label{eq5}
{}^{(\bullet)}\H_X^G(T)= {}^{(\bullet)}\H^G_{X,Z}(T) \cdot {}^{(\bullet)}\H^G_{X,X\setminus Z}(T).
\end{equation}
This implies that one has to prove  that ${}^{(\bullet)}\H^G_{X, p^{-1}(Y_{h,\alpha})}(T)=\left( {}^{(\bullet)}\H^H_{\C^d,\alpha}(T^{|G|/|H]})\right)^{[Y_{h,\alpha}]}$. 
Moreover, by the same reason, it is sufficient to prove that ${}^{(\bullet)}\H^G_{X, p^{-1}(Y_i)}(T)=\left( {}^{(\bullet)}\H^H_{\C^d,\alpha}(T^{|G|/|H]})\right)^{[Y_{i}]}$ for elements $Y_i$ of a Zariski open covering of $Y_{h,\alpha}.$ Without loss of generality one may assume that $X$ (and therefore $p^{-1}(Y_{h,\alpha})$) lies in an affine space $\AA_{\C}^N$ (an affine chart of the ambient projective space).

Let us fix linear equations corresponding to the representation $\alpha$ of the group $H$:
\begin{equation}\label{repr}
g^*z_i=\sum\limits_{j=1}^{d}\alpha_{i,j}(g)z_j\,,
\end{equation}
where $(\alpha_{i,j}(g)) = \alpha(g)$.
For a point $x\in p^{-1}(Y_{h,\alpha})$ with the isotropy subgroup $G_x$ coinciding with $H$ (not only conjugate to it), let $u_1$, $u_2$, \dots, $u_d$ be a regular system of parameters on the manifold $X$ at the point $x$. For example, one may suppose that $x$ is the origin in $\AA_{\C}^N$ and $u_1$, $u_2$, \dots, $u_d$ are standard coordinates in $\AA_{\C}^N$ such that the projection of the tangent space $T_xX$ to the corresponding $d$-dimensional coordinate subspace is non-degenerate. In this case $u_1-u_1(x')$, $u_2-u_2(x')$, \dots, $u_d-u_d(x')$ is a regular system of parameters at each point $x'$ from a Zariski open neighbourhood of the point $x$ in $X$. Moreover, let us suppose that the parameters $u_1$, $u_2$, \dots, $u_d$ are choosen in such a way that the representation of the subgroup $H$ in the tangent space $T_xX$ is given by the standard equations (\ref{repr}). Define functions $\widetilde{u}_1$, $\widetilde{u}_2$, \dots, $\widetilde{u}_d$ on $X$ by the equations
$$
\widetilde{u}_i=\frac{1}{\vert H\vert}\sum\limits_{g\in H}\sum\limits_{j=1}^{d}\alpha_{i,j}(g^{-1})g^*u_j\,.
$$
One has
$$
g^*\widetilde{u}_i=\sum\limits_{j=1}^{d}\alpha_{i,j}(g)\widetilde{u}_j\,.
$$
At the point $x$ one has $d\widetilde{u}_i=du_i$. Therefore $\widetilde{u}_1-\widetilde{u}_1(x')$, $\widetilde{u}_2-\widetilde{u}_2(x')$, \dots, $\widetilde{u}_d-\widetilde{u}_d(x')$ is a regular system of parameters at each point from a Zariski open neighbourhood $Z_x$ of the point $x$ in the corresponding irreducible component of $p^{-1}(Y_{h,\alpha})$. The choice of the regular set of parameters identifies $H$-invariant zero-dimensional subschemes on $X$ supported at a point $x'$ from $Z_x$ and thus $G$-invariant zero-dimensional subschemes on $X$ supported at $GZ_x$ with $H$-invariant zero-dimensional subschemes on the space $\AA^d_{\C}$ (with the action defined by the equations (\ref{repr})) supported at the origin.

This way, a $G$-invariant zero-dimensional subscheme on $X$
supported at points of the subvariety $GZ_x$ is defined by a finite
subset $K\subset Y_x=p(Z_x)\equiv Z_x$ to each point of which there corresponds an $H$-invariant
zero-dimensional subscheme on $\AA^d_{\C}$ supported at the
origin. The length of the subscheme is equal to $\vert G\vert/\vert H\vert$ times the sum of lengths
of the corresponding subschemes of $\AA^d_{\C}$. As it
follows from the geometric description of the power structure over
the Grothendieck ring of quasi-projective varieties, the
coefficient at $T^n$ in the right hand side of the equation
(\ref{eqtheo1}) is represented just by the configuration space of such
objects. This proves the statement.
\end{proof}

\section{Generating series of classes of equivariant Hilbert schemes for two-dimensional representations of a cyclic group.}
Let the cyclic group $\Z_M$ act on the plane $\C^2$ by $\sigma*(x,y)=(\sigma x, \sigma^N y)$ where $\sigma=\exp\left(\frac{2\pi i}{M}\right)$ is the generator of $\Z_M$. For $N=-1$ (or rather $N\equiv -1 \mod M$) the factor space $\C^2/\Z_M$ has the $A_{M-1}$ singularity.

We shall denote ${}^{(\bullet)}\H^{\Z_M}_{\C^2}(T)$, ${}^{(\bullet)}\H^{\Z_M}_{\C^2,0}(T)$, {\dots} for this action by ${}^{(\bullet)}\H^{M, N}_{\C^2}(T)$, ${}^{(\bullet)}\H^{M,N}_{\C^2,0}(T)$, \dots

\begin{theorem} The following equation holds:
\begin{eqnarray}\label{eqth2}
{}^{(1)}\H^{M,-1}_{\C^2,0}(T)&=&\prod^{\infty}_{i=1}\left(\frac{(1-T^{Mi})^M}{1-T^i}\cdot\frac{1}{(1-\LLL^iT^{Mi})^{M-1}\cdot (1-\LLL^{i-1}T^{Mi})}{}\right)\,,\notag \\
{}^{(2)}\H^{M,-1}_{\C^2,0}(T)&=&\prod^{\infty}_{i=1}\frac{1}{(1-\LLL^iT^{Mi})^{M-1}\cdots(1-\LLL^{i-1}T^{Mi})}{}\,.
\end{eqnarray}
\end{theorem}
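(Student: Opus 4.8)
\medskip\noindent{\bf Proof proposal.}
The plan is to reduce the computation of these \emph{local} series to the \emph{global} equivariant Hilbert schemes ${}^{(\bullet)}\mbox{Hilb}^{\Z_M,k}_{\C^2}$, which are smooth since $\mbox{Hilb}^k_{\C^2}$ is, to evaluate those by torus localization, and to peel off the local contribution at the origin by Theorem~\ref{theo1}. Let $T=(\C^*)^2$ be the maximal torus acting on $\C^2$, so that $\Z_M\subset T$, and fix a generic one-parameter subgroup $\lambda$ of $T$. Since $\lambda$ commutes with $\Z_M$ it acts on each ${}^{(\bullet)}\mbox{Hilb}^{\Z_M,k}_{\C^2}$, and for a suitable orientation every orbit has a limit as $t\to0$, namely a monomial ideal supported at the origin; moreover the $\lambda$-fixed points are exactly such monomial ideals. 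Hence each ${}^{(\bullet)}\mbox{Hilb}^{\Z_M,k}_{\C^2}$ carries a Bia{\l}ynicki--Birula paving by affine spaces $W_\mu$ indexed by the monomial ideals $I_\mu$ it contains, so that $[{}^{(\bullet)}\mbox{Hilb}^{\Z_M,k}_{\C^2}]=\sum_\mu\LLL^{\dim W_\mu}$ in $K_0(\Var)$. For the version ${}^{(1)}$ all monomial ideals with $|\mu|=k$ occur; for ${}^{(2)}$ one first has to show that the ones occurring are precisely the $I_\mu$ with $|\mu|=Mi$ and empty $M$-core (for $i=1$ these are the $M$ monomial $\Z_M$-clusters, i.e.\ the torus-fixed points of the minimal resolution $Y$ of $\C^2/\Z_M$) --- this is where one must identify which fixed points lie on the distinguished component.

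Next one computes $\dim W_\mu$, the number of $\lambda$-attracting directions at $I_\mu$, i.e.\ the number of positive $\lambda$-weights of $T_{I_\mu}{}^{(\bullet)}\mbox{Hilb}^{\Z_M,k}_{\C^2}$. Here one uses the standard (Ellingsrud--Str\o mme) description
\[
T_{I_\mu}\mbox{Hilb}^k_{\C^2}=\sum_{s\in\mu}\left(t_1^{-l(s)}t_2^{a(s)+1}+t_1^{l(s)+1}t_2^{-a(s)}\right)
\]
(as a $T$-module, with arm $a(s)$ and leg $l(s)$) together with the fact that the character $t_1^pt_2^q$ restricts to $\Z_M$ as $p-q\bmod M$; thus the $\Z_M$-invariant part of $T_{I_\mu}\mbox{Hilb}^k_{\C^2}$ is spanned by the two weight lines attached to each box $s$ whose hook length $h(s)=a(s)+l(s)+1$ is divisible by $M$. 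For ${}^{(1)}$ this \emph{is} $T_{I_\mu}{}^{(1)}\mbox{Hilb}^{\Z_M,k}_{\C^2}$; for ${}^{(2)}$ it contains $T_{I_\mu}{}^{(2)}\mbox{Hilb}^{\Z_M,Mi}_{\C^2}$, which must be singled out inside it (for instance through the description of ${}^{(2)}\mbox{Hilb}^{\Z_M,\bullet}_{\C^2}$ as a moduli space of representations of the McKay quiver of $\Z_M$). A convenient choice of $\lambda$ then expresses $\dim W_\mu$ in terms of arms, legs and hook lengths of $\mu$; summing $\LLL^{\dim W_\mu}T^{|\mu|}$ and reorganizing the sum by the $M$-core/$M$-quotient bijection $\mu\leftrightarrow(c,(\nu^{(0)},\dots,\nu^{(M-1)}))$, $|\mu|=|c|+M\sum_j|\nu^{(j)}|$ (the boxes with $M\mid h(s)$ being those of the quotient), one should obtain a closed product: the $M$-cores contribute the purely combinatorial factor $\sum_cT^{|c|}=\prod_{i\ge1}\frac{(1-T^{Mi})^M}{1-T^i}$, while the $M$ components of the quotient contribute the $\LLL$-dependent factors --- the G\"ottsche factor $\prod_i(1-\LLL^{i-1}T^{Mi})^{-1}$ from one of them and $\prod_i(1-\LLL^iT^{Mi})^{-1}=\prod_i(1-(\LLL T^M)^i)^{-1}$ from each of the other $M-1$, up to a twist by $\LLL^2=[\C^2]$ that is absorbed in the last step.

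Finally one descends to the origin. The quotient $\C^2/\Z_M$ has a single point $\bar0$ with non-trivial isotropy, namely $\Z_M$ acting by the representation $(1,-1)$, and $[(\C^2/\Z_M)\setminus\{\bar0\}]=\LLL^2-1$; hence Theorem~\ref{theo1} gives
\[
{}^{(\bullet)}\H^{\Z_M}_{\C^2}(T)={}^{(\bullet)}\H^{M,-1}_{\C^2,0}(T)\cdot\left(\H_{\AA^2_{\C},0}(T^M)\right)^{\LLL^2-1},
\]
and solving for ${}^{(\bullet)}\H^{M,-1}_{\C^2,0}(T)$ and combining with the previous step yields the two asserted formulas. (For ${}^{(2)}$ there is a shorter, more geometric route: identify ${}^{(2)}\mbox{Hilb}^{\Z_M,Mi}_{\C^2,0}$ with $\mbox{Hilb}^i_{Y,E}$, the Hilbert scheme of $i$ points of the minimal resolution $Y$ supported on the exceptional chain $E$, whose class is $(M-1)\LLL+1$; then equation~(\ref{eq2}) applied to the smooth surface $Y$ gives ${}^{(2)}\H^{M,-1}_{\C^2,0}(T)=\left(\H_{\AA^2_{\C},0}(T^M)\right)^{(M-1)\LLL+1}$, and an elementary manipulation of the power structure over $K_0(\Var)$ brings this to the product form.) The main obstacle is the combinatorial bookkeeping of the middle step: pinning down, for version ${}^{(2)}$, the tangent space to the distinguished component and its $\lambda$-weights, and checking that $\sum_\mu\LLL^{\dim W_\mu}T^{|\mu|}$ collapses exactly to the claimed product; the singularity of the punctual equivariant Hilbert schemes themselves is avoided by working on the smooth global spaces and invoking Theorem~\ref{theo1}.
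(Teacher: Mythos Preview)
Your proposal is essentially correct and follows the same skeleton as the paper's proof: Ellingsrud--Str\o mme/Bia{\l}ynicki--Birula cells on the (equivariant) Hilbert scheme, identification of the cell dimension with a hook-length count, the $M$-core/$M$-quotient bijection to factorize the generating series, and then Theorem~\ref{theo1} to pass from a computable intermediate series to the punctual one ${}^{(\bullet)}\H^{M,-1}_{\C^2,0}(T)$.

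The one genuine difference is the choice of intermediate space. The paper works with the Hilbert scheme supported on an invariant \emph{line} $\C\subset\C^2$ and then corrects by $\left(\H_{\AA^2_\C,0}(T^M)\right)^{1-\LLL}$, while you work with the \emph{global} Hilbert scheme on $\C^2$ and correct by $\left(\H_{\AA^2_\C,0}(T^M)\right)^{1-\LLL^2}$ (your computation $[(\C^2/\Z_M)\setminus\{\bar0\}]=\LLL^2-1$ is fine). The paper's choice is not accidental: in the line-supported setting the Ellingsrud--Str\o mme ``positive part'' $T^+$ contributes exactly one tangent direction per box, so the BB cell dimension at $I_\mu$ is literally the number of boxes with $M\mid h(s)$, i.e.\ the total size of the $M$-quotient, and the product form drops out immediately. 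In your global setting each box with $M\mid h(s)$ contributes \emph{two} $\Z_M$-invariant tangent directions, and how many of them are $\lambda$-positive depends on the individual arm/leg values, not just on the hook length; the sentence ``up to a twist by $\LLL^2=[\C^2]$ that is absorbed in the last step'' hides a real extra combinatorial verification that you have not carried out. It can be done, but it is strictly more work than the paper's route.

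Two minor points. First, since for surfaces ${}^{(2)}\mbox{Hilb}^{\Z_M,k}_{\C^2}$ is a union of connected components of the smooth variety ${}^{(1)}\mbox{Hilb}^{\Z_M,k}_{\C^2}$, its tangent space at any of its points \emph{equals} the $\Z_M$-invariant tangent space; there is nothing to ``single out'', so that worry can be dropped. Second, your alternative geometric argument for ${}^{(2)}$ via $\mbox{Hilb}^i_{Y,E}$ on the minimal resolution (with $[E]=(M-1)\LLL+1$) is a clean shortcut that the paper does not use and that bypasses the combinatorics entirely for that case.
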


\begin{proof}
It appears to be somewhat simpler to describe the computation of ${}^{(\bullet)}\H^{M,-1}_{\C^2,\C}(T)$ where $\C$ is an invariant line of the $\Z_M$-action on the plane $\C^2$ and then to apply Theorem \ref{theo1} to get ${}^{(\bullet)}\H^{M,-1}_{\C^2,0}(T)$. To compute ${}^{(\bullet)}\H^{M,N}_{\C^2}(T)$, ${}^{(\bullet)}\H^{M,N}_{\C^2,\C}(T)$, or ${}^{(\bullet)}\H^{M,N}_{\C^2,0}(T)$, one uses the method of G. Ellingsrud and S.A. Str\o mme \cite{ell-str} based on a result of A. Bialynicki-Birula. For that, one considers the natural action of the complex 2-torus $\C^*\times\C^*$ on the projective plane $\CP^2$ and the corresponding action on the Hilbert schemes of zero-dimensional subschemes on it. This action has a finite number of fixed points. For a subgroup $G\subset \C^*\times\C^*$ (say, for a cyclic one) the torus acts on the corresponding equivariant Hilbert schemes as well. This action (with a fixed generic subgroup of $\C^*\times\C^*$ isomorphic to $\C^*$) defines cell decompositions of the Hilbert schemes of zero-dimensional suschemes and of the equivariant one(s). Cells (locally closed subvarieties isomorphic to complex affine spaces) correspond to fixed points of the action on the Hilbert schemes. The dimension of a cell is equal to the dimension of the subspace of the tangent space corresponding to representations of $\C^*$ with positive characters. Fixed points of the natural action of the torus $\C^*\times\C^*$ on $\mbox{Hilb}_{\C^2}^k$ and also on ${}^{(1)}\mbox{Hilb}_{\C^2}^{M,N;k}$ are the monomial ideals in $\C[[x,y]]$ of length (codimension) $k$. Monomial ideals of length $k$ correspond to partitions of $k$, i.e. to Young diagrams of size $k$. Fixed points of the ($\C^*\times\C^*$)-action on ${}^{(2)}\mbox{Hilb}_{\C^2}^{M,N;k}$ are those monomial ideals of length $k$, for which the corresponding Young diagram has the same numbers of boxes with the quasi-homogeneous weights $0$, $1$, {\dots}, $M-1$. (The box corresponding to the monomial $x^ky^{\ell}$ has the quasi-homogeneous weight equal to $k+N\ell \mod{M}$. In the last case the Hilbert scheme (and the set of fixed points) is empty if $k$ is not divisible by $M$.)

From \cite{ell-str}, it follows that the dimension of the cell in ${}^{(\bullet)}\mbox{Hilb}^{M,-1; k}_{\C^2,\C}$ corresponding to the fixed point described by a Young diagram of the partition $\{b_0\ge b_1\ge \ldots\ge b_{r-1}>0\}$ of the integer $k$ is equal to the number of monomials in the (equivariant) expression
$$
T^+=\sum_{1\le i\le j\le r}\sum_{s=b_j}^{b_{j-1}-1} \lambda^{i-j-1}\mu^{b_{i-1}-s-1}\, ,
$$
for the ``positive part" of the tangent space to the Hilbert scheme $\mbox{Hilb}^{M,N;k}_{\C^2, \C}$ with the weights $N(i-j-1)+(b_{i-1}-s-1)\equiv 0 \mod M$. For $N=-1$ these weights are just the lengths of hooks of the corresponding Young diagram. For a fixed $M$, Young diagrams of size $k$ are in one-to-one correspondence with the sets consisting of the so-called $M$-core (of size $k'$) of the diagram (empty for diagrams with equal numbers of boxes with different weights) and its ``star $M$-diagram", i.e. a collection of ``usual" diagrams of sizes, say, $k_0$, $k_1$, \dots,  $k_{M-1}$, such that $k'+M(k_0+k_1+\ldots+k_{M-1})=k$: see, e.g., \cite{repres} The dimension of the corresponding cell (i.e. the number of hooks with length divisible by $M$) is equal to $k_0+k_1+\ldots+k_{M-1}$. For ${}^{(2)}\mbox{Hilb}^{M,-1; k}_{\C^2,\C}$ ($k'=0$) this means that the cells correspond to partition of sizes $k_0$, $k_1$, \dots,  $k_{M-1}$ for representations of $k/M$ as an ordered sum $k_0+k_1+\ldots+k_{M-1}$. The dimension of the corresponding cell is equal to $k_0+k_1+\ldots+k_{M-1}$. Therefore
$$
{}^{(2)}\H^{M,-1}_{\C^2,\C}(T)=\prod^{\infty}_{i=1}\frac{1}{(1-\LLL^iT^{Mi})^{M}}\,.
$$
The generating series for the numbers of $M$-cores of different sizes is
$$
\prod^{\infty}_{i=1}\frac{(1-T^{Mi})^M}{1-T^i}
$$
(see, e.g., \cite{chen}). Therefore
$$
{}^{(1)}\H^{M,-1}_{\C^2,\C}(T)=\prod^{\infty}_{i=1}\frac{(1-T^{Mi})^M}{1-T^i}\cdot\prod_{i=1}^{\infty}\frac{1}{(1-\LLL^iT^{Mi})^{M}}
\,.
$$
From Theorem \ref{theo1}, equation (2) and the equation $(1-\LLL^i T)^{-\LLL}=(1-\LLL^{i+1}T)^{-1}$ it follows that
$$
{}^{(\bullet)}\H^{M,-1}_{\C^2,0}(T)={}^{(\bullet)}\H^{M,-1}_{\C^2,\C}(T)\cdot\left(\prod_{i=1}^{\infty}\frac{1}{1-\LLL^{i-1}T^{Mi}}\right)^{1-\LLL}
={}^{(\bullet)}\H^{M,-1}_{\C^2,\C}(T)\cdot\prod_{i=1}^{\infty}\frac{1-\LLL^{i}T^{Mi}}{1-\LLL^{i-1}T^{Mi}}\,.
$$
Therefore
$$
{}^{(1)}\H^{M,-1}_{\C^2,0}(T)=\prod^{\infty}_{i=1}\frac{(1-t^{Mi})^M}{1-T^i}\cdot
\prod^{\infty}_{i=1}\frac{1}{(1-\LLL^iT^{Mi})^{M-1}(1-\LLL^{i-1}T^{Mi})}
\,,
$$
$$
{}^{(2)}\H^{M,-1}_{\C^2,0}(T)=
\prod^{\infty}_{i=1}\frac{1}{(1-\LLL^iT^{Mi})^{M-1}(1-\LLL^{i-1}T^{Mi})}
\,.
$$
\end{proof}

\begin{corollary}
\emph{Let the cyclic group $\Z_M$ act on a smooth surface $S$ in such a way, that the factor space $S/\Z_M$ has only $A_{M-1}$ singularities $($i.e. at each of $d$ fixed points $P_1$, \dots, $P_d$ one has the representation corresponding to $N=-1$$)$. Then}
$$
{}^{(1)}\H^{M,-1}_{S}(T)=
\left(\prod^{\infty}_{i=1}\frac{(1-t^{Mi})^M}{(1-T^i)(1-\LLL^iT^{Mi})^{M-1}(1-\LLL^{i-1}T^{Mi})}\right)^{d}
\cdot\left(\prod^{\infty}_{i=1}\frac{1}{1-\LLL^{i-1}T^{Mi}}\right)^{[(S\setminus\{P_i\})/Z_M]}
\,.
$$
$$
{}^{(2)}\H^{M,-1}_{S}(T)=
\left(\prod^{\infty}_{i=1}\frac{1}{(1-\LLL^iT^{Mi})^{M-1}(1-\LLL^{i-1}T^{Mi})}\right)^{d}
\cdot\left(\prod^{\infty}_{i=1}\frac{1}{1-\LLL^{i-1}T^{Mi}}\right)^{[(S\setminus\{P_i\})/Z_M]}
\,.
$$
\end{corollary}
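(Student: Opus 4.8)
The plan is to deduce both identities from Theorem~\ref{theo1} applied to $G=\Z_M$ and $X=S$, using the local computation of Theorem~2 at the fixed points and G\"ottsche's formula~(\ref{got}) away from them.

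First I would describe the stratification of $Y=S/\Z_M$ by isotropy type. The hypothesis that $S/\Z_M$ has only $A_{M-1}$ singularities, occurring at the images of $P_1,\dots,P_d$, means precisely that the only points of $S$ with non-trivial isotropy are $P_1,\dots,P_d$, that the isotropy group at each of them is all of $\Z_M$, and that the representation on the tangent plane $T_{P_i}S$ is the $N=-1$ representation $(x,y)\mapsto(\sigma x,\sigma^{-1}y)$ (this is exactly the two-dimensional cyclic quotient singularity of type $A_{M-1}$). Hence $Y$ has precisely two isotropy strata: the $d$ points $p(P_1),\dots,p(P_d)$, forming the stratum $Y_{h,\alpha}$ with $h$ the conjugacy class of $\Z_M$ and $\alpha$ the $N=-1$ representation, whose class in $K_0(\Var)$ is $d$; and its complement, which is identified by $p$ with $(S\setminus\{P_i\})/\Z_M$ and on which $\Z_M$ acts freely, so that the isotropy is trivial.

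Next I would read off the two factors from~(\ref{eqtheo1}) (equivalently, from the multiplicativity~(\ref{eq5}) over the $\Z_M$-invariant decomposition $S=\{P_1,\dots,P_d\}\sqcup(S\setminus\{P_i\})$). On the free stratum the isotropy group is $\{e\}$, so $|G|/|G_x|=M$ and the local equivariant series is the ordinary local Hilbert series ${}^{(\bullet)}\H^{\{e\}}_{S,x}(T^M)=\H_{S,x}(T^M)$; since $S$ is a smooth surface this equals $\H_{\C^2,0}(T^M)=\prod_{i\ge1}(1-\LLL^{i-1}T^{Mi})^{-1}$ by~(\ref{got}), and integrating over the free stratum contributes the factor $\bigl(\prod_{i\ge1}(1-\LLL^{i-1}T^{Mi})^{-1}\bigr)^{[(S\setminus\{P_i\})/\Z_M]}$. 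At each $P_i$ we have $|G|/|G_x|=M/M=1$, so the local contribution is the full series ${}^{(\bullet)}\H^{M,-1}_{\C^2,0}(T)$ of Theorem~2; as the stratum $Y_{h,\alpha}$ has class $d$, this produces the factor $\bigl({}^{(\bullet)}\H^{M,-1}_{\C^2,0}(T)\bigr)^{d}$. Multiplying the two factors and substituting the explicit products~(\ref{eqth2}) for ${}^{(1)}\H^{M,-1}_{\C^2,0}(T)$ and ${}^{(2)}\H^{M,-1}_{\C^2,0}(T)$ yields the two asserted formulae.

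Since Theorems~\ref{theo1} and~2 carry all the real content, the only step that needs care is the identification of the isotropy stratification: one has to be sure that outside $P_1,\dots,P_d$ the $\Z_M$-action is genuinely free --- no points with intermediate cyclic isotropy and no pseudo-reflections --- and that the tangent representation at each $P_i$ is exactly the $(1,-1)$-representation and not some other faithful two-dimensional one. Both are forced by the assumption that the only singularities of $S/\Z_M$ are of type $A_{M-1}$, via the classification of cyclic quotient surface singularities. After this, nothing remains beyond inserting~(\ref{eqth2}).
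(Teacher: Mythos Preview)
Your argument is correct and is precisely the intended one: the paper gives no separate proof of the corollary, regarding it as an immediate consequence of Theorem~\ref{theo1} together with the local formulae of Theorem~2 and~(\ref{got}), which is exactly what you spell out. Your extra remark about ruling out intermediate isotropy and pseudo-reflections is a reasonable caution, but note that the hypothesis already names the fixed locus as the finite set $\{P_1,\dots,P_d\}$ with the $N=-1$ tangent representation, so this is part of the assumption rather than something to be derived.
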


\begin{example}
Let the group $\Z_3$ act on the projective plane $\CP^2$ by $\sigma*(x_0:x_1:x_2)=(x_0:\sigma x_1:\sigma^2 x_2)$. Then
\begin{eqnarray*}
{}^{(2)}\H^{\Z_3}_{\CP^2}(T)&=&1+(1+7\LLL+\LLL^2)T^3+(1+8\LLL+36\LLL^2+8\LLL^3+\LLL^4)T^6 \\
&&+(1+8\LLL+44\LLL^2+149\LLL^3+44\LLL^4+8\LLL^5+\LLL^6)T^9 \\
&&+(1+8\LLL+45\LLL^2+192\LLL^3+543\LLL^4+192\LLL^5+45\LLL^6+8\LLL^7+\LLL^8)T^{12}+\ldots
\end{eqnarray*}
\end{example}

\begin{remarks}
{\bf 1.} One can easily see that if $N_1N_2\equiv 1 \mod M$, one has ${}^{(\bullet)}\H^{M,N_1}_{\C^2,0}(T)={}^{(\bullet)}\H^{M,N_2}_{\C^2,0}(T)$.

\noindent{\bf 2.} It seems that, for $N\ne-1$, one has somewhat better (less complicated) formulae for the series ${}^{(1)}\H^{M,N}_{\C^2,0}(t)$ than for the series ${}^{(2)}\H^{M,N}_{\C^2,0}(T)$ (at least in the form similar to (\ref{got}) and (\ref{eqth2}). To show that, it is convenient to write down the logarithms $\mbox{Log\,}{}^{(\bullet)}\H^{M,N}_{\C^2,0}(T)$ of the generating series ${}^{(\bullet)}\H^{M,N}_{\C^2,0}(T)$ in the sense of \cite{MRL}: if $A(T)=\prod\limits_{i,j}(1-\LLL^jT^i)^{-k_{ij}}$, with $k_{ij}\in\Z$, then by definition $\mbox{Log\,}A(T)=\sum\limits_{i,j}k_{ij}\LLL^jT^i$. In particular, the equation (\ref{eqth2}) means that 
$$
\mbox{Log\,}{}^{(2)}\H^{M,-1}_{\C^2,0}(T)=\sum_{i=1}^\infty \left((M-1)\LLL^i+\LLL^{i-1}\right) T^{M i}\,.
$$
Computations made by the use of Maple gave:
\begin{eqnarray*}
\mbox{Log\,}{}^{(1)}\H^{3,1}_{\C^2,0}(T)&=&T+\LLL T^2+T^3+\LLL T^4+\LLL^2T^5+\LLL T^6+\LLL^2T^7+\LLL^3T^8+\LLL^2T^9  \\
&&+\LLL^3T^{10}+\LLL^4T^{11}+\LLL^3T^{12}+\LLL^4T^{13}+\LLL^5T^{14}+\LLL^4T^{15}+\LLL^5T^{16} \\
&&+\LLL^6T^{17}+\LLL^5T^{18}+\LLL^6T^{19}+\LLL^7T^{20}+\LLL^6T^{21}+\ldots
\end{eqnarray*}
\begin{eqnarray*}
\mbox{Log\,}{}^{(2)}\H^{3,1}_{\C^2,0}(T)&=&(1+\LLL)T^3+(2\LLL+2\LLL^2+\LLL^3)T^6+(2\LLL^2+2\LLL^3+\LLL^4)T^9 \\ & &+(-\LLL^2+\LLL^3-\LLL^6)T^{12}  +(-\LLL^3-\LLL^5-\LLL^6-\LLL^7)T^{15}+(2\LLL^5+\LLL^7)T^{18} \\
&&+(2\LLL^4+3\LLL^5+7\LLL^6+6\LLL^7+6\LLL^8+3\LLL^9+2\LLL^{10})T^{21}+\ldots
\end{eqnarray*}

One can make the following

\begin{conjecture}
$$
{}^{(1)}\H^{3,1}_{\C^2,0}(T)=\prod_{i=1}^\infty \frac{1}{(1-\LLL^{i-1}T^{3i-2})(1-\LLL^{i}T^{3i-1})(1-\LLL^{i-1}T^{3i})}\,.
$$
\end{conjecture}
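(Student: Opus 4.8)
The plan is to follow, for $N=1$ and $M=3$, the strategy of the proof of Theorem 2. First I would pass from the origin to an invariant line. For $N=1$ the action of $\Z_3$ on $\C^2$ is by scalars, so every line through $0$ is invariant; fix the invariant line $\C=\{y=0\}$, on which $\Z_3$ acts by $x\mapsto\sigma x$, freely away from $0$. Then the computation that produces the analogous relation for $N=-1$ in the proof of Theorem 2 carries over verbatim: using the additivity (\ref{eq5}) for the pair $\{0\}\subset\C$, Theorem \ref{theo1} on the free stratum $(\C\setminus\{0\})/\Z_3$ (with $[(\C\setminus\{0\})/\Z_3]=\LLL-1$), and the relation $(1-\LLL^{i-1}T)^{\LLL}=1-\LLL^i T$, one obtains
$$
{}^{(1)}\H^{3,1}_{\C^2,0}(T)={}^{(1)}\H^{3,1}_{\C^2,\C}(T)\cdot\prod_{i=1}^{\infty}\frac{1-\LLL^{i}T^{3i}}{1-\LLL^{i-1}T^{3i}}\,,
$$
so it suffices to compute ${}^{(1)}\H^{3,1}_{\C^2,\C}(T)$.

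For the latter I would use the Bialynicki-Birula/Ellingsrud--Str\o mme method, as in the proof of Theorem 2. The torus $\C^*\times\C^*$ (which contains $\Z_3$) acts on ${}^{(1)}\mbox{Hilb}^{3,1;k}_{\C^2,\C}$ with isolated fixed points, the monomial ideals, i.e. the Young diagrams $\mu$ with $|\mu|=k$, and a generic one-parameter subgroup gives a cell decomposition whose cell through $\mu$ has dimension equal to the number of boxes $s\in\mu$ whose attached tangent monomial is $\Z_3$-invariant. By the formula recalled in the proof of Theorem 2, the $\Z_3$-weight of that monomial is $N(i-j-1)+(b_{i-1}-s-1)=a(s)-N(l(s)+1)$, which for $N=1$ equals $a(s)-l(s)-1$ (and for $N=-1$ equals the hook length $a(s)+l(s)+1$, reproducing the count in Theorem 2). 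Hence
$$
{}^{(1)}\H^{3,1}_{\C^2,\C}(T)=\sum_{\mu}\LLL^{\,f(\mu)}\,T^{|\mu|},\qquad f(\mu)=\#\{\,s\in\mu:\ a(s)-l(s)\equiv 1\bmod 3\,\}\,,
$$
the sum being over all Young diagrams.

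Combining the two displays, the conjecture is equivalent to the purely combinatorial identity
$$
\sum_{\mu}\LLL^{\,f(\mu)}\,T^{|\mu|}=\prod_{i=1}^{\infty}\frac{1}{(1-\LLL^{i-1}T^{3i-2})(1-\LLL^{i}T^{3i-1})(1-\LLL^{i}T^{3i})}\,.
$$
Expanding the right-hand side into a sum over ordered triples $(\alpha,\beta,\gamma)$ of Young diagrams, this amounts to a bijection $\mu\leftrightarrow(\alpha,\beta,\gamma)$ matching the two gradings, $|\mu|=(3|\alpha|-2\ell(\alpha))+(3|\beta|-\ell(\beta))+3|\gamma|$ and $f(\mu)=(|\alpha|-\ell(\alpha))+|\beta|+|\gamma|$, where $\ell$ denotes the number of parts. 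The natural candidate is a twisted form of the abacus ($3$-quotient) correspondence: one groups the positions of the Maya diagram of $\mu$ modulo $3$, but weights the three runners asymmetrically so that what is recorded is the residue of $a(s)-l(s)$ modulo $3$ rather than of the hook length. As a consistency check, at $\LLL=1$ the identity degenerates to Euler's $\prod_n(1-T^n)^{-1}$, and its low-degree terms reproduce the Maple values of $\mbox{Log}\,{}^{(1)}\H^{3,1}_{\C^2,0}(T)$ recorded in the Remark.

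The main obstacle is precisely this last identity. For $N=-1$ the relevant statistic is the number of hook lengths divisible by $M$, which by the classical $M$-core/$M$-quotient decomposition equals the size of the $M$-quotient, so the series factors at once (the $M$-core contributing the known series $\prod_i(1-T^{Mi})^M/(1-T^i)$). Here the statistic $\#\{s:a(s)-l(s)\equiv1\bmod 3\}$ is not additive under addition of $3$-rim hooks --- it takes the values $1$, $0$, $1$ on the three Young diagrams of size $3$ --- so no off-the-shelf core/quotient formula applies, and the difficulty is to identify the correct $3$-periodic (twisted) decomposition of the Maya diagram under which $f$ becomes a sum of three independent partition statistics with the weights above, and to prove it. An alternative route would be to establish the displayed $q$-series identity directly, for instance by a transfer-matrix or Cauchy-identity argument on Maya diagrams.
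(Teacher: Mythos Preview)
The paper does not prove this statement: it is explicitly labelled a \emph{Conjecture}, supported only by the Maple computation of $\mbox{Log}\,{}^{(1)}\H^{3,1}_{\C^2,0}(T)$ up to degree $21$ recorded in the Remarks. So there is no ``paper's own proof'' to compare against.

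Your proposal is not a proof either, and to your credit you say so yourself. The reduction in your first two steps is sound and exactly parallels what the paper does for $N=-1$: the passage from the origin to the invariant line via Theorem~\ref{theo1} and (\ref{eq5}) goes through unchanged (the isotropy on $\C\setminus\{0\}$ is trivial for $N=1$ just as for $N=-1$), and the Ellingsrud--Str\o mme cell count correctly produces the formula
\[
{}^{(1)}\H^{3,1}_{\C^2,\C}(T)=\sum_{\mu}\LLL^{\,f(\mu)}\,T^{|\mu|},\qquad f(\mu)=\#\{s\in\mu:\ a(s)-l(s)\equiv 1\bmod 3\},
\]
so the Conjecture is indeed equivalent to the $q$-series identity you write down.

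The genuine gap is exactly where you locate it. For $N=-1$ the relevant statistic is the number of hooks of length divisible by $M$, and the classical $M$-core/$M$-quotient bijection makes this additive and yields the factorisation at once; that is the entire content of the proof of Theorem~2. For $N=1$ the statistic $a(s)-l(s)\bmod 3$ is not governed by the standard abacus, as you observe, and neither the paper nor your proposal supplies a substitute bijection or an independent proof of the identity. Your suggestions (a ``twisted'' abacus, a transfer-matrix argument) are reasonable directions but remain programmatic. Until that identity is established, the statement stays a conjecture, which is precisely how the paper presents it.
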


A conjectural equation for ${}^{(2)}\H^{3,1}_{\C^2,0}(T)$ is not clear.

Some other examples:
\begin{eqnarray*}
\mbox{Log\,}{}^{(1)}\H^{4,1}_{\C^2,0}(T)&=&T+\LLL T^2+T^3+\LLL T^4+T^5+(-1+\LLL+\LLL^2)T^6+T^7+(-1+\LLL+\LLL^2)T^8  \\
&& +T^9 +(-1+\LLL^2+\LLL^3)T^{10}+T^{11}+(-1+\LLL^2+\LLL^3)T^{12} \\
&&+T^{13}+(-1+\LLL^3+\LLL^4)T^{14}+T^{15} 
+(-1+\LLL^3+\LLL^4)T^{16}\\
&&+T^{17}+(-1+\LLL^4+\LLL^5)T^{18}+T^{19}+(-1+\LLL^4+\LLL^5)T^{20}+\ldots
\end{eqnarray*}
\begin{eqnarray*}
\mbox{Log\,}{}^{(2)}\H^{4,1}_{\C^2,0}(T)&=&(1+\LLL)T^4+(2\LLL+2\LLL^2+\LLL^3)T^8+(\LLL+4\LLL^2+5\LLL^3+3\LLL^4+\LLL^5)T^{12} \\
&& +(4\LLL^3+5\LLL^4+3\LLL^5)T^{16}+(-\LLL^2-3\LLL^3-2\LLL^4-\LLL^5-3\LLL^6-3\LLL^7 \\
&& -\LLL^8)T^{20}+\ldots 
\end{eqnarray*}
\begin{eqnarray*}
\mbox{Log\,}{}^{(1)}\H^{5,2}_{\C^2,0}(T)&=&T+T^2+\LLL T^3+\LLL T^4+T^5+\LLL T^6+\LLL T^7+\LLL^2T^8+\LLL^2T^9+\LLL T^{10} \\
&&+\LLL^2T^{11} 
+\LLL^2T^{12}+\LLL^3T^{13}+\LLL^3T^{14}+\LLL^2T^{15}+\LLL^3T^{16}+\LLL^3T^{17}+\LLL^4T^{18}\\ &&+\LLL^4T^{19}
+\LLL^3T^{20}+\LLL^4T^{21}+\LLL^4T^{22}+\LLL^5T^{23}+\LLL^5T^{24}+\LLL^4T^{25}+\ldots
\end{eqnarray*}
\begin{eqnarray*}
\mbox{Log\,}{}^{(2)}\H^{5,2}_{\C^2,0}(T)&=&(1+2\LLL)T^5+(3\LLL+5\LLL^2+2\LLL^3)T^{10}+(3\LLL^2+5\LLL^3+2\LLL^4)T^{15}\\ &&+(-3\LLL^2-2\LLL^3-4\LLL^4-3\LLL^5-3\LLL^6)T^{20}\\
&&+(-3\LLL^3-6\LLL^4-9\LLL^5-7\LLL^6-3\LLL^7)T^{25}+\ldots
\end{eqnarray*}

\noindent{\bf 3.} Though a conjectural formula for ${}^{(2)}\H^{M,1}_{\C^2,0}(T)$ (or for $\mbox{Log\,}{}^{(2)}\H^{M,1}_{\C^2,0}(T)$) is not clear even for small $M>2$, computations show that one could have the following stabilization. Let $\mbox{Log\,}{}^{(2)}\H^{M,1}_{\C^2,0}(T)=\sum\limits_{i=1}^{\infty}p_i^{M,1}(\LLL)\cdot T^{Mi}$, where $p_i^{M,1}(\LLL)$ are polynomials in $\LLL$. The computations predict that $p_i^{M',1}(\LLL)=p_i^{M'',1}(\LLL)$ for $M''>M'>i$.
\end{remarks}


\end{document}